\documentclass[11pt]{amsart}
\usepackage{latexsym}
\usepackage{amscd}
\usepackage{amssymb}
\usepackage[all]{xy}
\usepackage{amsmath, pb-diagram}
\setlength{\textwidth}{14cm} \setlength{\textheight}{21.6cm}
\DeclareMathAlphabet{\eusm}{OT1}{eusm}{m}{n}
%\DeclareMathAlphabet{\Bbb}{OT1}{msbm}{b}{n}
\newtheorem{theorem}{Theorem}[section]

\newtheorem{remark}[theorem]{Remark}

\def\Ker{\mbox{Ker\/}}
\def\Im{\mbox{Im\/}}
\def\End{\mbox{End\/}}

\def\H{\mbox{Hom\/}}

\begin{document}
\title[Modules coinvariant under automorphisms of their projective covers]{Modules which are coinvariant under automorphisms of their projective covers}
\subjclass[2010]{16D40, 16D80.} 
\keywords{automorphism-invariant modules, automorphism-coinvariant modules, dual automorphism-invariant modules}
\author[Guil Asensio]{Pedro A. Guil Asensio}
\address{Departamento de Mathematicas, Universidad de Murcia, Murcia, 30100, Spain}
\email{paguil@um.es}
\author[Kesk\.{i}n T\"ut\"unc\"u]{Derya Kesk\.{i}n T\"ut\"unc\"u}
\address{Department of Mathematics, Hacettepe University, Ankara, 06800, Turkey}
\email{keskin@hacettepe.edu.tr}
\author[Kalebo\~{g}az]{Berke Kalebo\~{g}az}
\address{Department of Mathematics, Hacettepe University, Ankara, 06800, Turkey}
\email{bkuru@hacettepe.edu.tr}
\author[Srivastava]{Ashish K. Srivastava}
\address{Department of Mathematics and Computer Science, St. Louis University, St.
Louis, MO-63103, USA} \email{asrivas3@slu.edu}

\maketitle

\begin{abstract} 
In this paper we study modules coinvariant under automorphisms of their projective covers. We first provide an alternative, and in fact, a more succinct  and conceptual proof for the result that a module $M$ is invariant under automorphisms of its injective envelope if and only if given any submodule $N$ of $M$, any monomorphism $f:N\rightarrow M$ can be extended to an endomorphism of $M$ and then, as a dual of it, we show that over a right perfect ring, a module $M$ is coinvariant under automorphisms of its projective cover if and only if for every submodule $N$ of $M$, any epimorphism $\varphi: M\rightarrow M/N$ can be lifted to an endomorphism of $M$. 
\end{abstract}

\bigskip

\bigskip

\section{Introduction}

\bigskip

\noindent Modules invariant or coinvariant under automorphisms of their covers or envelopes have been recently introduced in \cite{GKS}. Recall that a class $\mathcal{X}$ of right modules over a ring $R$, closed under isomorphisms, is called an {\em enveloping class} if for any right $R$-module $M$, there exists a homomorphism $u:M\rightarrow X(M)$, with $X(M)\in \mathcal X$, such that any other morphism from $M$ to a module in $\mathcal X$ factors through $u$ and, moreover, whenever $u$ has a factorization $u=h\circ u$, then $h$ must be an automorphism. This morphism $u$ is called the {\em $\mathcal X$-envelope} of $M$. 
And this envelope is a {\em monomorphic envelope} if, in addition, $u$ is a monomorphism. 
Dually, $\mathcal X$ is called a {\em covering class} if for any right $R$-module $M$, there exists a homomorphism $p:X(M)\rightarrow M$ such that any other homomorphism from an object of $\mathcal X$ to $M$ factors through $p$ and moreover, whenever $p$ factors as $p=p\circ h$, $h$ must be an automorphism. 
This morphism $p$ is called the {\em $\mathcal X$-cover of $M$} and this cover is said to be an {\em epimorphic cover} if $p$ is an epimorphism. 

A module $M$ having a monomorphic $\mathcal X$-envelope $u:M\rightarrow X(M)$ (resp. epimorphic $\mathcal X$-cover $p:X(M)\rightarrow M$) is said 
to be {\em invariant under $\varphi$} 
(resp. {\em coinvariant under $\varphi$}), $\varphi:X(M)\rightarrow X(M)$, if there exists an endomorphism $f:M\rightarrow M$ such that $u\circ f=\varphi \circ u$ 
(resp. $f\circ p=p\circ\varphi$). 

A module $M$ having a monomorphic $\mathcal X$-envelope $u:M\rightarrow X(M)$ (resp. epimorphic $\mathcal X$-cover $p:X(M)\rightarrow M$) is said 
to be {\em $\mathcal X$-automorphism invariant} 
(resp. {\em $\mathcal X$-automomorphism coinvariant}) 
if $M$ is invariant (resp. coinvariant) under each automorphism $\varphi:X(M)\rightarrow X(M)$.

If a module $M$ is invariant (resp. coinvariant) under each endomorphism $\varphi:X(M)\rightarrow X(M)$, then $M$ is called {\em $\mathcal X$-endomorphism invariant} 
(resp. {\em $\mathcal X$-endomorphism coinvariant}).  

When $\mathcal X$ is the class of injective modules, $\mathcal X$-automorphism invariant modules are usually just called {\em automorphism-invariant} modules and $\mathcal X$-endomorphism invariant modules are called {\em quasi-injective} modules. When $\mathcal X$ is the class of projective modules, $\mathcal X$-automorphism coinvariant modules are called {\em automorphism-coinvariant} modules and $\mathcal X$-endomorphism coinvariant modules are called {\em quasi-projective} modules.

Automorphism-invariant modules have been studied extensively in \cite{AFT,DF,ESS,GS1,GS2, GS3,LZ,SS2}. On the other hand, it was proved in \cite{ESS} that a module $M$ is automorphism-invariant if and only if any monomorphism from a submodule $N$ of $M$ to $M$ extends to an endomorphism of $M$. The main goal of this note is to give a new and more conceptual proof of this result which allows to dualize it to automorphism-coinvariant modules.   

Throughout this note, all rings will be associative rings with identity and `module' will mean a unitary right module unless otherwise stated. We refer to \cite{AF, CLVW} for any undefined notion used along the text.  

\bigskip

\section{Main results}

\bigskip

\noindent We begin by noting an important structural result from \cite{GQS} which will be of crucial importance throughout. 

\begin{theorem} \label{struct} \cite{GQS} 
Let $\mathcal{X}$ be an enveloping (resp., covering) class of modules. If $u:M\rightarrow X$ is a monomorphic $\mathcal{X}$-envelope (resp., $p:X\rightarrow M$ is an epimorphic cover) of a module $M$ such that $M$ is $\mathcal{X}$-automorphism invariant (resp., $\mathcal{X}$-automorphism coinvariant) and $\End(X)/J(\End(X))$ is a von Neumann regular right self-injective ring and idempotents lift modulo $J(\End(X))$, then $\End(M)/J(\End(M))$ is also a von Neumann regular ring and idempotents in $\End(M)/J(\End(M))$ lift to idempotents in $\End(M)$. Moreover, $M$ admits a decomposition $M=N\oplus L$ such that:
\begin{enumerate}
\item[(i)] $\End(N)/J(\End(N))$ is a Boolean ring.
\item[(ii)] Each element of $\End(L)$ is the sum of two units and consequently, $L$ is $\mathcal X$-endomorphism invariant (resp., $\mathcal X$-endomorphism coinvariant).
\item[(iii)] Both $\H_R(N,L)$ and $\H_R(L,N)$ are contained in $J(\End(M))$.
\end{enumerate}
In particular, $\End(M)/J(\End(M))$ is the direct product of a Boolean ring and a right self-injective von Neumann regular ring.
\end{theorem}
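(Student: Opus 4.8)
The plan is to establish the statement for a monomorphic $\mathcal X$-envelope $u\colon M\to X$ (the automorphism-invariant case) and then to read off the covering/coinvariant case by reversing every arrow: the factorization of maps \emph{into} objects of $\mathcal X$ is replaced by the factorization of maps \emph{out of} them, and the r\^ole of a submodule of $M$ invariant under $\mathrm{Aut}(X)$ is played by $\Ker p$ invariant under $\mathrm{Aut}(X)$. Identifying $M$ with $u(M)\le X$, write $E=\End(M)$, $S=\End(X)$ and $\bar S=S/J(S)$. The first and decisive step is to compare $E$ with $S$. For $f\in E$, the envelope property applied to $u\circ f\colon M\to X$ produces $g\in S$ with $g\circ u=u\circ f$, so that $g(M)\subseteq M$ and $g$ restricts to $f$; hence, setting $\Delta=\{g\in S:g(M)\subseteq M\}$, restriction is a surjective ring homomorphism $\rho\colon\Delta\to E$, and the hypothesis that $M$ is $\mathcal X$-automorphism invariant says precisely that $\Delta$ contains every unit of $S$. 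I would then show that $\Ker\rho\subseteq J(\Delta)$ and that $\rho$ descends to an isomorphism of $E/J(E)$ onto a von Neumann regular, right self-injective corner-type subfactor of $\bar S$ into which idempotents still lift. Here the minimality of the envelope (any $h$ with $h\circ u=u$ is an automorphism) substitutes for the essentiality of $M$ in $X$ available in the classical injective situation, and the inheritance of regularity, right self-injectivity and idempotent lifting rests on the fact that these properties pass to corner rings of von Neumann regular right self-injective rings.

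With $\bar E:=E/J(E)$ now known to be von Neumann regular and right self-injective with lifting idempotents, the next step is purely ring-theoretic. Since the Boolean algebra of central idempotents of such a ring is complete, there is a central idempotent $e$ that splits off the largest Boolean factor, giving $\bar E=B\times T$ with $B$ Boolean and $T$ von Neumann regular, right self-injective, and \emph{without} any factor ring isomorphic to $\mathbb{F}_2$. By the Khurana--Srivastava theorem characterizing when every element of a right self-injective ring is a sum of two units, every element of such a $T$ is a sum of two units. This already delivers the final assertion that $\bar E$ is the direct product of a Boolean ring and a right self-injective von Neumann regular ring.

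It remains to lift this splitting to $M$. Lifting $e$ to an idempotent $\epsilon\in E$ and putting $N=\epsilon M$, $L=(1-\epsilon)M$ gives $M=N\oplus L$. As $e$ is central modulo $J(E)$, the two off-diagonal Peirce components $\epsilon E(1-\epsilon)$ and $(1-\epsilon)E\epsilon$ lie in $J(E)$, which is exactly statement (iii), namely $\H_R(N,L),\H_R(L,N)\subseteq J(E)$. Using $\End(N)=\epsilon E\epsilon$, $\End(L)=(1-\epsilon)E(1-\epsilon)$ and $J(\epsilon E\epsilon)=\epsilon J(E)\epsilon$, one gets $\End(N)/J(\End(N))\cong B$, which is (i), and $\End(L)/J(\End(L))\cong T$; since units lift modulo the radical, the ``sum of two units'' property transports from $T$ to $\End(L)$, giving the first half of (ii). Finally, $L$ is a direct summand of the automorphism-invariant module $M$ and is therefore itself $\mathcal X$-automorphism invariant; because $\End(L)/J(\End(L))\cong T$ has no $\mathbb{F}_2$ factor, neither does $\End(X(L))/J(\End(X(L)))$, so every endomorphism of the envelope $X(L)$ is a sum of two automorphisms, under each of which $L$ is invariant. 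Hence $L$ is invariant under all endomorphisms of $X(L)$, i.e. $\mathcal X$-endomorphism invariant, which completes (ii).

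The crux, and the step I expect to be genuinely hard, is the comparison of Step~1 for an arbitrary enveloping (or covering) class. Without essentiality of $M$ in $X$ one cannot identify $J(S)$ with the maps having essential kernel, so both the containment $\Ker\rho\subseteq J(\Delta)$ and, above all, the transfer of \emph{right self-injectivity} to $E/J(E)$ must be squeezed out of the minimality of the envelope together with the assumed regular self-injective structure of $\bar S$. Establishing that $E/J(E)$ is a genuine corner-type subfactor of $\bar S$ rather than merely a homomorphic image is the delicate point on which all three conclusions depend.
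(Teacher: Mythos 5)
A preliminary remark: the paper itself contains no proof of this statement --- Theorem~\ref{struct} is quoted from the preprint \cite{GQS} and used as a black box in Theorems~\ref{auto} and~\ref{dual} --- so your attempt can only be measured against the known arguments for results of this kind and against the way the present paper actually consumes the theorem. The sound parts of your sketch are the surjectivity of the restriction map $\rho\colon\Delta\to\End(M)$, the observation that $\mathcal X$-automorphism invariance says exactly that $\Delta$ contains every unit of $S=\End(X)$, and the use of the minimality clause of the envelope to force $\Ker\rho\subseteq J(\Delta)$. The genuine gap is where you assert that $E/J(E)$ (with $E=\End(M)$) is a von Neumann regular \emph{right self-injective} ``corner-type subfactor'' of $\bar S$ and then run the entire decomposition (completeness of the Boolean algebra of central idempotents, splitting off the largest Boolean factor, the Khurana--Srivastava two-units theorem) inside $\bar E=E/J(E)$. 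Right self-injectivity of $\bar E$ is not part of the conclusion and fails in general: the theorem only claims that $\bar E$ is regular with idempotent lifting, and that it is a product of a Boolean ring (which need not be complete, hence need not be self-injective) with a self-injective regular ring. Concretely, writing $\bar S=\bar S_1\times\bar S_2$ with $\bar S_1$ Boolean, the image of $\Delta$ in $\bar S$ is a unital subring containing all units; since the only unit of a Boolean ring is $1$, such a subring is only guaranteed to contain $\mathbb{F}_2\cdot 1\times\bar S_2$, and on the Boolean side it is an arbitrary Boolean subring of $\bar S_1$, not a corner $\bar e\bar S\bar e$, so self-injectivity does not pass to it. Hence your appeal to the complete Boolean algebra of central idempotents of $\bar E$, and your application of the two-units theorem to the factor $T$ of $\bar E$, are both unsupported, and everything downstream of them (the choice of $\epsilon$, items (i)--(iii)) inherits the gap.

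The repair --- and the route the source takes, as one can read off from how Theorems~\ref{auto} and~\ref{dual} use the result --- is to perform the Boolean/two-units splitting \emph{upstairs} in $\bar S$, where right self-injectivity is a hypothesis: one obtains $X=X_1\oplus X_2$ with $\End(X_1)/J(\End(X_1))$ Boolean and every element of $\End(X_2)/J(\End(X_2))$, hence of $\End(X_2)$ since units lift modulo the radical, a sum of two units; the lifted central idempotent of $S$ is then expressed through sums of automorphisms of $X$, each of which preserves $M$ by automorphism-invariance, and this is what produces $M=N\oplus L$ together with (iii). With this ordering, (ii) is immediate: every endomorphism of $X_2=X(L)$ is a sum of units, $L$ is invariant under each, so $L$ is $\mathcal X$-endomorphism invariant and every element of $\End(L)$ is the restriction of a sum of two automorphisms. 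Your reverse inference in the last step --- from the absence of $\mathbb{F}_2$-factors in $\End(L)/J(\End(L))$ to their absence in $\End(X(L))/J(\End(X(L)))$ --- is likewise unjustified as stated, and it is exactly what the upstairs-first ordering makes unnecessary.
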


\noindent Recall that a module $M$ is called {\it pseudo-injective} if given any submodule $A$ of $M$, any monomorphism $f:A\rightarrow M$ can be extended to an endomorphism of $M$ (see \cite{JS,Teply}). We will first give a new proof showing that automorphism-invariant modules coincide with pseudo-injective modules. 

Throughout this section, we will follow notations as in \cite{GKS} to write elements in direct product of rings $R_1\times R_2$ as $a_1\times a_2$ where $a_1\in R_1$ and $a_2\in R_2$. To write an element in $R/J(R)$, we will use the notation $\bar{a}$ where $a\in R$.

\begin{theorem} \label{auto} \cite{ESS}
A module $M$ is automorphism-invariant if and only if it is pseudo-injective.  
\end{theorem}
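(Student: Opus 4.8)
The plan is to work throughout inside the injective envelope $E:=E(M)$ and to regard the inclusion $u:M\to E$ as the relevant $\mathcal X$-envelope, so that ``$M$ is automorphism-invariant'' means exactly that $\sigma(M)\subseteq M$ for every $\sigma\in\mathrm{Aut}(E)$. First I would record two standard facts about $S:=\End(E)$ that will be used repeatedly: that $J(S)$ consists precisely of the endomorphisms of $E$ with essential kernel, and that $\bar S:=S/J(S)$ is a von Neumann regular right self-injective ring in which idempotents and units lift. These are properties of the injective module $E$ alone (they are exactly the hypotheses feeding Theorem \ref{struct}). I would then prove the two implications separately, in each case solving the relevant extension problem first in $E$, where injectivity and the good structure of $S$ are available, and only afterwards restricting to $M$.

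For the implication pseudo-injective $\Rightarrow$ automorphism-invariant, I would fix $\sigma\in\mathrm{Aut}(E)$ and set $A=M\cap\sigma^{-1}(M)$, which is essential in $M$ (hence in $E$) as an intersection of two essential submodules. Then $\sigma|_A:A\to M$ is a monomorphism, so by hypothesis it extends to some $g\in\End(M)$; lifting $g$ to $\hat g\in S$ by injectivity of $E$, the maps $\hat g$ and $\sigma$ agree on the essential submodule $A$, whence $\hat g-\sigma\in J(S)$. Since $\sigma$ is a unit, so is $\hat g$; thus $\hat g\in\mathrm{Aut}(E)$ with $\hat g(M)=g(M)\subseteq M$, while $\hat g\equiv\sigma\pmod{J(S)}$. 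The whole problem now reduces to the single assertion that $M$ is invariant under $1+J(S)$: writing $\sigma=\hat g-j$ with $j:=\hat g-\sigma\in J(S)$, once $j(M)\subseteq M$ is known one gets $\sigma(M)\subseteq M$. This invariance under the radical is the hard part of the argument. It cannot be obtained by merely re-running the extension trick, which only produces \emph{some} element of $1+J(S)$ stabilizing $M$, not the prescribed one; instead I would derive it from pseudo-injectivity together with the von Neumann regular right self-injective structure of $\bar S$, via lifting of idempotents and the exchange property. (Crucially, I would \emph{not} invoke Theorem \ref{struct} here, since that result presupposes the automorphism-invariance one is trying to establish.)

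For the converse, automorphism-invariant $\Rightarrow$ pseudo-injective, let $A\le M$ and let $f:A\to M$ be a monomorphism. I would extend $f$ to $\hat f\in S$; since $f$ is injective and $A$ is essential in its injective hull $E(A)\subseteq E$, the restriction $\hat f|_{E(A)}$ is a monomorphism, so $\hat f(E(A))=E(f(A))$ is an injective, hence direct, summand of $E$. Writing $E=E(A)\oplus X=E(f(A))\oplus Y$ with $\hat f|_{E(A)}:E(A)\to E(f(A))$ an isomorphism, the task is to promote this isomorphism to an \emph{automorphism} $\varphi$ of $E$ with $\varphi|_A=f$; for then automorphism-invariance gives $\varphi(M)\subseteq M$, and $\varphi|_M\in\End(M)$ extends $f$. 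Such a $\varphi$ exists as soon as $X\cong Y$, and this cancellation of complements is the main obstacle, since it can fail for general injective modules. Here Theorem \ref{struct} is legitimately available (as $M$ is now assumed automorphism-invariant), and I would use the decomposition $M=N\oplus L$ it provides: on the quasi-injective summand $L$ every homomorphism from a submodule already extends, so nothing is needed there, while on $N$ the ring $\End(N)/J(\End(N))$ is Boolean and hence unit-regular, which is exactly the regime where the required cancellation holds. Reassembling the two summands then yields the automorphism $\varphi$.

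Finally, I would record the trivial sanity checks that bracket the statement: a quasi-injective module is pseudo-injective (a monomorphism is in particular a homomorphism, and these extend) and is automorphism-invariant (being invariant under all of $\End(E)$), so both notions sit correctly above quasi-injectivity. The conceptual advantage of this route is that each extension problem is solved in $E$, where injectivity and the regular self-injective structure of $S$ do the work, and the return to $M$ is governed entirely by invariance under $J(S)$ and by the $N\oplus L$ decomposition; this is precisely what will make the dualization to projective covers in the next result run along parallel lines.
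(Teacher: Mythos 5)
Your overall architecture (work inside $E=E(M)$, exploit the structure of $S/J(S)$ and the Boolean $\times$ (sum-of-two-units) splitting supplied by Theorem \ref{struct}) matches the paper's, but the central mechanism is missing, and your substitute for it fails. In the direction automorphism-invariant $\Rightarrow$ pseudo-injective you propose to extend the monomorphism $f:A\to M$ to a single \emph{automorphism} $\varphi$ of $E$ and apply invariance once. Such a $\varphi$ need not exist: it would force the complements $X$ of $E(A)$ and $Y$ of $E(f(A))$ to be isomorphic, and this internal cancellation genuinely fails in $E$, because the factor $\End(E_2)/J(\End(E_2))$ is only right self-injective von Neumann regular, not unit-regular. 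Concretely, for $E\cong E\oplus E$ (e.g. $E=\Bbb Q^{(\Bbb N)}$ over $\Bbb Z$) and $f$ the corner embedding $E\cong E\oplus 0\subseteq E\oplus E$ with $A=M=E$, one has $X=0$ and $Y\cong E$, so no automorphism of $E$ restricts to $f$, although $E$ is injective and the extension problem is trivially solvable. Your proposed rescue via the decomposition $M=N\oplus L$ does not close the gap: the submodule $A$ and the map $f$ have no reason to respect that decomposition, so there is nothing to ``reassemble''; the splitting into a Boolean part and a sum-of-units part is only available \emph{modulo} $J(S)$, in the product $\bar{S}=\bar{S_1}\times\bar{S_2}$. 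That is exactly where the paper works: it expresses $\bar{h}$ (the class of a lift of $f$) as a sum of \emph{three} automorphisms of $\bar{S}$ agreeing with $\bar{h}$ on $\bar{e}\bar{S}$ --- an automorphism $\bar{h'_1}$ on the Boolean factor obtained by inserting the identity on $(1-\bar{e_1})\bar{S_1}$, paired with a three-unit decomposition of $\bar{h_2}$ on the other factor with first coordinates $\bar{h'_1},\bar{h'_1},-\bar{h'_1}$. Each lifted automorphism carries $M$ into $M$, hence so does their sum $\gamma$ even though $\gamma$ is not an automorphism, and the discrepancy on $eS$ is an element $j\in J(S)$ which is absorbed using that $M$ is invariant under the automorphism $1-j$, hence under $j$. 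Without this sum-of-three-units device and the radical correction, your argument for this implication does not go through.

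For the converse (pseudo-injective $\Rightarrow$ automorphism-invariant), your reduction to ``$M$ is invariant under $J(S)$'' is correct but is essentially a restatement of the problem: given the extension trick, invariance under $J(S)$ is equivalent to automorphism-invariance (since $j=(1+j)-1$), and you explicitly leave this ``hard part'' to an unexecuted appeal to idempotent lifting and the exchange property. The paper treats this direction as known, citing \cite{LZ}; if you intend a self-contained proof you must supply an actual argument here rather than defer it.
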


\begin{proof}
Let $M$ be an automorphism-invariant module. Let $N$ be a submodule of $M$ and $f:N\rightarrow M$ be a monomorphism. Call $E=E(M)$. It is well-known that $\End(E)/J(\End(E))$ is von Neumann regular, right self-injective and idempotents lift modulo $J(\End(E))$. Since injective modules are obviously automorphism-invariant, from Theorem \ref{struct}, we have that $E=E_1\oplus E_2$, where $\H(E_1, E_2), \H(E_2, E_1) \subseteq J(\End(E))$ and $$\End(E)/J(\End(E))=\End(E_1)/J(\End(E_1)) \times \End(E_2)/J(\End(E_2))$$ such that $\End(E_1)/J(\End(E_1))$ is a Boolean ring and each element in the ring $\End(E_2)/J(\End(E_2))$ is the sum of two units. Call $S=\End(E)$, $S_1=\End(E_1)$ and $S_2=\End(E_2)$. Let $v:E(N)\rightarrow E$ be the inclusion and $p:E\rightarrow E(N)$, an epimorphism that splits $v$. Then $e=v\circ p\in S$ is an idempotent such that $E(N)=eE$. 
By injectivity, $f$ extends to a monomorphism $g:E(N)\rightarrow E$. This monomorphism $g$ splits as $E(N)$ is injective. So there exists an epimorphism $\delta: E\rightarrow E(N)$ such that $\delta \circ g=1_{E(N)}$. Call $h=g\circ p \in S$. We claim that $\bar{h}|_{\bar{e}\bar{S}}:\bar{e}\bar{S}\rightarrow \bar{S}$ is a monomorphism. 
Let $x\in S$ such that $\bar{e}\bar{x}\in \ker(\bar{h})$. This means that $h\circ e\circ x=g\circ p\circ e\circ x$ has essential kernel. 
And, as $g$ is a monomorphism, we deduce that $p\circ e \circ x = p\circ v\circ p\circ x= p\circ x$ also has essential kernel. So, $e\circ x = v\circ p\circ x$ has essential kernel too. This shows that $\bar{e}\bar{x}=0$ and thus, $\bar{h}|_{\bar{e} \bar{S}}$ is monic. 

As $\bar{S}=\bar{S_1} \times \bar{S_2}$, there exist idempotents $\bar{e_1} \in \bar{S_1}$ and $\bar{e_2} \in \bar{S_2}$ such that $\bar{e}=\bar{e_1} \times \bar{e_2} \in \bar{S_1} \times \bar{S_2}$ and homomorphisms $\bar{h_1}: \bar{S_1} \rightarrow \bar{S_1}$ and $\bar{h_2}: \bar{S_2}\rightarrow \bar{S_2}$ such that $\bar{h}=\bar{h_1} \times \bar{h_2}$. Moreover, $\bar{h_i}|_{\bar{e_i} \bar{S_i}}: \bar{e_i}\bar{S_i} \rightarrow \bar{S_i}$ is a monomorphism and $\bar{h_i}|_{(1-\bar{e_i})\bar{S_i}}=0$ for $i=1, 2$. As $\Im(\bar{h})\cong \bar{e} \bar{S}$, it is a direct summand of $\bar{S}$. So there exists an idempotent $\bar{e'}\in \bar{S}$ such that $\Im(\bar{h})=\bar{e'} \bar{S}$. And again, $\bar{e'}=\bar{e'_1} \times \bar{e'_2}$ for idempotents $\bar{e'_1} \in \bar{S_1}$ and $\bar{e'_2} \in \bar{S_2}$. Also, we have $\Ker(\bar{h_1})=(1-\bar{e_1})S$ as $\bar{e_1}\in \bar{S_1}$ is central and $(1-e)h=0$. This yields $\bar{e_1}=\bar{e'_1}$. 

 Call $\bar{h'_1}: \bar{S_1} \rightarrow \bar{S_1}$ the homomorphism defined by $\bar{h'_1}|_{\bar{e_1} \bar{S_1}}=\bar{h_1}|_{\bar{e_1} \bar{S_1}}$ and $\bar{h'_1}|_{(1-\bar{e_1})\bar{S_1}}=1_{(1-\bar{e_1})\bar{S_1}}$. By construction, $\bar{h'_1}$ is an automorphism in $\bar{S_1}$. On the other hand, $\bar{h_2}\in \bar{S_2}$, so we can write $\bar{h_2}$ as the sum of two automorphisms, say $\bar{h_2}=\bar{h'_2} + \bar{h''_2}$. And again, $\bar{h''_2}$ can be written as the sum of two automorphisms in $\bar{S_2}$, say $\bar{h''_2}=\bar{t_2} + \bar{t'_2}$.

Set $\bar{\gamma_1}= \bar{h'_1}\times \bar{h'_2}$, $\bar{\gamma_2}= \bar{h'_1}\times \bar{t_2}$, and $\bar{\gamma_3}= (\bar{-h'_1})\times \bar{t'_2}$. Consider then the homomorphism $\bar{\gamma}=\bar{\gamma_1}+\bar{\gamma_2}+\bar{\gamma_3}$. Then $\bar{\gamma}$ is the sum of three automorphisms $\bar{\gamma_1}$, $\bar{\gamma_2}$ and $\bar{\gamma_3}$ in $\bar{S}$. Note that for any $x_1\times x_2\in \bar{e}\bar{S}=\bar{e_1} \bar{S} \times \bar{e_2} \bar{S}$, we have that $(\bar{h'_1}\times \bar{h'_2}+\bar{h'_1}\times \bar{t_2} + (-\bar{h'_1}) \times \bar{t'_2})(x_1\times x_2)=\bar{h'_1}(x_1)\times \bar{h_2}(x_2)= \bar{h_1}(x_1)\times \bar{h_2}(x_2)=\bar{h}(x_1\times x_2)$ since $\bar{h'_1}|_{\bar{e_1}\bar{S}}= \bar{h_1}|_{\bar{e_1}\bar{S}}$. This means that $\bar{\gamma}$ is the sum of three automorphisms in $\bar{S}$ and $\bar{\gamma}|_{\bar{e}\bar{S}}=\bar{h}|_{\bar{e}\bar{S}}$. Let us lift the three automorphisms $\bar{\gamma_i} \in \bar{S}$ to automorphisms $\gamma_i \in S$. As $M$ is automorphism-invariant, $\gamma_i(M)\subseteq M$ for $i=1, 2, 3$. So if we call $\gamma=\gamma_1+\gamma_2+\gamma_3$, we get that $\gamma(M)\subseteq M$. Moreover, as $\bar{\gamma}|_{\bar{e}\bar{S}}= \bar{h}|_{\bar{e}\bar{S}}$, there exists a $j\in J(S)$ such that $\gamma|_{eS}=h|_{eS}+j|_{eS}$. Thus $h|_{eS}=(\gamma-j)|_{eS}$. As $j\in J(S)$, $1-j$ is an automorphism and consequently, $M$ is invariant under $1-j$ and hence under $j$. We have already seen that $M$ is invariant under $\gamma$. Thus it follows that $M$ is invariant under $\gamma-j$. Call $\varphi=(\gamma-j)|_{M}$. Then $\varphi$ is an endomorphism of $S$ such that $\varphi|_{E(N)}=h|_{E(N)}$ and thus $\varphi|_{N}=f$. This shows that $\varphi$ extends $f$ and hence $M$ is pseudo-injective. 

The converse is straightforward (see \cite{LZ}).        
\end{proof}

\bigskip

\noindent Now, we proceed to dualize this result for automorphism-coinvariant modules.

\bigskip

\noindent Recall that a module $M$ is called a {\it pseudo-projective module} if for every submodule $N$ of $M$, any epimorphism $\varphi: M\rightarrow M/N$ can be lifted to a homomorphism $\psi: M\rightarrow M$ (see \cite{SS}). These modules generalize the class of projective modules and quasi-projective modules (see \cite{Golan, K, WJ}). It is known that any pseudo-projective module with a projective cover is dual automorphism-invariant (see \cite{SS}) and hence it is automorphism-coinvariant.

\begin{theorem} \label{dual}
If $R$ is a right perfect ring, then a right $R$-module $M$ is automorphism-coinvariant if and only if it is pseudo-projective.
\end{theorem}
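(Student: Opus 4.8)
The plan is to prove Theorem~\ref{dual} as the exact formal dual of Theorem~\ref{auto}, reversing every arrow via the dictionary: injective envelope $E(M)\leftrightarrow$ projective cover $P=P(M)$ (available since $R$ is right perfect), monomorphism $\leftrightarrow$ epimorphism, submodule $\leftrightarrow$ quotient, ``extends to an endomorphism'' $\leftrightarrow$ ``lifts to an endomorphism'', essential kernel $\leftrightarrow$ superfluous image, and the splitting of $g$ coming from injectivity of $E(N)$ $\leftrightarrow$ the splitting coming from projectivity of the cover of $M/N$. The easy direction is immediate: if $M$ is pseudo-projective then, $R$ being right perfect, $M$ has a projective cover, so by the result quoted just before the theorem $M$ is dual automorphism-invariant and hence automorphism-coinvariant. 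All the work is in the converse, which I now outline.

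So assume $M$ is automorphism-coinvariant, fix $N\le M$ and an epimorphism $\varphi\colon M\to M/N$, and write $\pi\colon M\to M/N$ for the canonical projection; I seek $\psi\in\End(M)$ with $\pi\psi=\varphi$. First I would fix the projective cover $p\colon P\to M$, put $S=\End(P)$, and verify the hypotheses of Theorem~\ref{struct} for $S$. This is precisely where right perfectness is used: for a projective module over a right perfect ring one has the dual description $J(S)=\{s\in S:\Im(s)\ll P\}$, and $\bar S=S/J(S)$ is von Neumann regular, right self-injective, with idempotents lifting modulo $J(S)$. Since projective modules are trivially automorphism-coinvariant, Theorem~\ref{struct} then yields a decomposition $P=P_1\oplus P_2$ with $\H(P_1,P_2),\H(P_2,P_1)\subseteq J(S)$ and $\bar S=\bar S_1\times\bar S_2$, where $\bar S_1$ is Boolean and every element of $\bar S_2$ is a sum of two units.

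Next I would build the dual of the map $h$. Let $c\colon Q\to M/N$ be the projective cover of $M/N$ and lift $\pi p$ and $\varphi p$ through $c$, using projectivity of $P$, to get $g_0,g\colon P\to Q$ with $cg_0=\pi p$ and $cg=\varphi p$; both are epimorphisms because $\Ker c\ll Q$, and $g_0$ splits through a section $\delta\colon Q\to P$ since $Q$ is projective. Then $e=\delta g_0$ is an idempotent of $S$ with $eP\cong Q$, and $h=\delta g$ satisfies $eh=h$ and $g_0h=g$, exactly dual to the relations $he=h$, $hv=g$ of Theorem~\ref{auto}. The key claim, the mirror image of the monomorphism claim there, is that right multiplication by $\bar h$ is a monomorphism of left $\bar S$-modules on the left ideal $\bar S\bar e$. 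This is where the essential-kernel computation is replaced by a superfluous-image one, and here it is in fact transparent: since $g$ and $g_0$ are epimorphisms, $\Im(xh)=\Im(x\delta)=\Im(xe)$ for every $x\in S$, so $xh$ has superfluous image if and only if $xe$ does; by the description of $J(S)$ this says exactly that $\overline{xh}=0$ forces $\overline{xe}=0$.

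Once the claim is established, the remainder is the left-hand mirror of the computation in the proof of Theorem~\ref{auto}, and a pleasant feature is that no further input is needed: that argument, after invoking Theorem~\ref{struct}, uses only von Neumann regularity (principal one-sided ideals are generated by idempotents, hence are summands), the product splitting $\bar S=\bar S_1\times\bar S_2$, the commutativity of the Boolean factor, and the sum-of-two-units property of $\bar S_2$, all of which are left-right symmetric. Thus I would write $\bar h|_{\bar S\bar e}$ as $\bar\gamma|_{\bar S\bar e}$ for a sum $\bar\gamma=\bar\gamma_1+\bar\gamma_2+\bar\gamma_3$ of three automorphisms of $\bar S$, lift each $\bar\gamma_i$ to an automorphism $\gamma_i\in S$, and now invoke automorphism-\emph{coinvariance}: for each $\gamma_i$ there is $f_i\in\End(M)$ with $f_ip=p\gamma_i$, so $\gamma=\gamma_1+\gamma_2+\gamma_3$ descends along $p$; absorbing the $J(S)$-error $j$ (which descends too, as $1-j$ is an automorphism under which $M$ is coinvariant) produces $\psi\in\End(M)$ with $\psi p=p(\gamma-j)$, and tracing $g_0h=g$ through the cover gives $\pi\psi=\varphi$, so $M$ is pseudo-projective. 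I expect the single real obstacle to be the verification, in the second paragraph, that right perfectness delivers the hypotheses of Theorem~\ref{struct} for $S=\End(P)$, i.e.\ the dual radical description $J(S)=\{s:\Im s\ll P\}$ together with the regularity and self-injectivity of $\bar S$; granting that, the rest is a faithful dualization, since all subsequent steps are left-right symmetric and the superfluous-image version of the key claim is, as shown above, actually cleaner than its injective counterpart.
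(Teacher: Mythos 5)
Your proposal is correct and follows essentially the same route as the paper's proof: decompose $P$ via Theorem~\ref{struct}, lift $\varphi$ through the projective cover of $M/N$ to build $h\in S$ with $eh=h$, replace $\bar{h}$ on the relevant ideal by a sum of three units using the Boolean/two-units splitting, lift these to automorphisms of $P$, and invoke coinvariance to descend along $p$. The only divergence is cosmetic and sits in the key claim: you prove that right multiplication by $\bar{h}$ is monic on the left ideal $\bar{S}\bar{e}$ via the superfluous-image description of $J(S)$, whereas the paper verifies the exact identity $hS=eS$ directly from $h\circ \delta\circ q=e$; both yield the idempotent bookkeeping needed in the Boolean factor, and your left-ideal formulation in fact lands directly on the relation $\bar{e}\bar{\gamma}=\bar{e}\bar{h}$ that the final lifting step actually uses.
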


\begin{proof}
Let $M$ be an automorphism-coinvariant module over a right perfect ring $R$ with a projective cover $p:P\rightarrow M$. Let $N$ be a submodule of $M$ and $f:M\rightarrow M/N$, an epimorphism. It is known that $\End(P)/J(\End(P))$ is von Neumann regular, right self-injective and idempotents lift modulo $J(\End(P))$. As projective modules are clearly automorphism-coinvariant, by Theorem \ref{struct}, we have that $P=P_1\oplus P_2$, where $\H(P_1, P_2), \H(P_2, P_1) \subseteq J(\End(P))$ and 
$$\End(P)/J(\End(P))=\End(P_1)/J(\End(P_1)) \times
\End(P_2)/J(\End(P_2))$$
such that $\End(P_1)/J(\End(P_1))$ is a Boolean ring and each element in the ring $\End(P_2)/J(\End(P_2))$ is the sum of two units.

Let us denote by $\pi: M \rightarrow M/N$ the canonical projection and call $S=\End(P)$, $S_1=\End(P_1)$ and $S_2=\End(P_2)$. As $p:P\rightarrow M$ is a projective cover, there exists a direct summand $P(M/N)$ of $P$ such that, if we denote by $v:P(M/N)\rightarrow P$ and $q:P\rightarrow P(M/N)$ the structural injection and projection respectively, then $\pi\circ p\circ v:P(M/N)\rightarrow M/N$ is the projective cover of $M/N$, $e=v\circ q$ is an idempotent in $S$, and $P(M/N)=eP$.

By projectivity, $f$ lifts to an epimorphism $g:P\rightarrow P(M/N)$ such that $(\pi\circ p\circ v)\circ g=f\circ p$. This epimorphism $g$ splits as $P(M/N)$ is projective. Thus, there exists a monomorphism $\delta: P(M/N)\rightarrow P$ such that $g\circ \delta =1_{P(M/N)}$. Call $h=v\circ g\in S$. Note that $e\circ h=h$ and so $hS\subseteq eS$. Moreover, $h\circ \delta \circ q=v\circ g\circ \delta \circ q=v\circ q=e$. So $eS\subseteq hS$ and consequently, $hS=eS$. This shows that $h:S \rightarrow eS$ is epic and consequently, $\bar{h}:\bar{S}\rightarrow \bar{e}\bar{S}$ is an epimorphism.

 As $\bar{S}=\bar{S_1} \times \bar{S_2}$, there exist idempotents $\bar{e_1} \in \bar{S_1}$ and $\bar{e_2} \in \bar{S_2}$ such that $\bar{e}=\bar{e_1} \times \bar{e_2} \in \bar{S_1} \times \bar{S_2}$ and homomorphisms $\bar{h_1}: \bar{S_1} \rightarrow \bar{S_1}$ and $\bar{h_2}: \bar{S_2}\rightarrow \bar{S_2}$ such that $\bar{h}=\bar{h_1} \times \bar{h_2}$.  

 Moreover, $\bar{e_i}\circ \bar{h_i}: \bar{S_i}\rightarrow \bar{e_i}\bar{S_i}$ is an epimorphism, and $(1-\bar{e_i})\circ \bar{h_i}=0$ for $i=1, 2$. As $\Im(\bar{h})\cong \bar{e} \bar{S}$, it is a direct summand of $\bar{S}$. So there exists an idempotent $\bar{e'}\in \bar{S}$ such that $\Im(\bar{h})=\bar{e} \bar{S}$ and $\Ker(\bar{h})=(\bar{1}-\bar{e'})\bar{S}$. And again, $\bar{e'}=\bar{e'_1} \times \bar{e'_2}$ for idempotents $\bar{e'_1} \in \bar{S_1}$ and $\bar{e'_2} \in \bar{S_2}$. Also, we have $\Ker(\bar{h_1})=(1-\bar{e_1})S$ as $\bar{e_1}\in \bar{S_1}$ is central and $(1-e)h=0$. This yields $\bar{e_1}=\bar{e'_1}$. 

 Call $\bar{h'_1}: \bar{S_1} \rightarrow \bar{S_1}$ the homomorphism defined by $\bar{h'_1}|_{\bar{e_1} \bar{S_1}}=\bar{h_1}|_{\bar{e_1} \bar{S_1}}$ and $\bar{h'_1}|_{(1-\bar{e_1})\bar{S_1}}=1_{(1-\bar{e_1})\bar{S_1}}$. By construction, $\bar{h'_1}$ is an automorphism in $\bar{S_1}$. On the other hand, $\bar{h_2}\in \bar{S_2}$, so we can write $\bar{h_2}$ as the sum of two automorphisms, say $\bar{h_2}=\bar{h'_2} + \bar{h''_2}$. And again, $\bar{h''_2}$ can be written as the sum of two automorphisms in $\bar{S_2}$, say $\bar{h''_2}=\bar{t_2} + \bar{t'_2}$. 
 
 Set $\bar{\gamma_1}= \bar{h'_1}\times \bar{h'_2}$, $\bar{\gamma_2}= \bar{h'_1}\times \bar{t_2}$, and $\bar{\gamma_3}= (\bar{-h'_1})\times \bar{t'_2}$. Consider then the homomorphism $\bar{\gamma}=\bar{\gamma_1}+\bar{\gamma_2}+\bar{\gamma_3}$. Then $\bar{\gamma}$ is the sum of three automorphisms $\bar{\gamma_1}$, $\bar{\gamma_2}$ and $\bar{\gamma_3}$ in $\bar{S}$. Note that for any $x_1\times x_2\in \bar{e}\bar{S}=\bar{e_1} \bar{S} \times \bar{e_2} \bar{S}$, we have that $(\bar{h'_1}\times \bar{h'_2}+\bar{h'_1}\times \bar{t_2} + (-\bar{h'_1}) \times \bar{t'_2})(x_1\times x_2)=\bar{h'_1}(x_1)\times \bar{h_2}(x_2)= \bar{h_1}(x_1)\times \bar{h_2}(x_2)=\bar{h}(x_1\times x_2)$ since $\bar{h'_1}|_{\bar{e_1}\bar{S}}= \bar{h_1}|_{\bar{e_1}\bar{S}}$. This means that $\bar{\gamma}$ is the sum of three automorphisms in $\bar{S}$ and $\bar{\gamma}|_{\bar{e}\bar{S}}=\bar{h}|_{\bar{e}\bar{S}}$. Let us lift the three automorphisms $\bar{\gamma_i} \in \bar{S}$ to automorphisms $\gamma_i \in S$. As $M$ is automorphism-coinvariant, $M$ is coinvariant under $\gamma_i$ for $i=1, 2, 3$. So if we call $\gamma=\gamma_1+\gamma_2+\gamma_3$, we get that $M$ is coinvariant under $\gamma$. Moreover, as $\bar{e}\circ \bar{\gamma}= \bar{e}\circ \bar{h}$, there exists a $j\in J(S)$ such that $\pi \circ p\circ \gamma=\pi \circ p\circ h+\pi \circ p\circ j$. Thus $\pi \circ p\circ h=\pi \circ p\circ (\gamma-j)$. As $j\in J(S)$, $1-j$ is an automorphism and consequently, $M$ is coinvariant under $1-j$ and hence under $j$. We have already seen that $M$ is coinvariant under $\gamma$. Thus $M$ is coinvariant under $\gamma-j$ and hence by definition, it follows that there exists an endomorphism $t:M\rightarrow M$ such that $p\circ (\gamma-j)=t\circ p$. This gives $\pi \circ p\circ h=\pi\circ t\circ p$. As $\pi \circ p\circ h=f\circ p$, we have $\pi \circ t \circ p=f\circ p$. As $p$ is epic, we have $\pi\circ t=f$. 
 
 Thus $t$ is an endomorphism of $M$ such that $\pi\circ t=f$. This shows that  $M$ is pseudo-projective. 

The converse is straightforward \cite{SS}.   
\end{proof} 

\begin{remark} Note that the arguments given in the proof of Theorem\,\ref{dual} can be easily adapted to the situation in which the ring $R$ is only assumed to be semiperfect instead of right perfect, and the module $M$ is finitely generated and therefore, it has a projective cover.  
\end{remark}

\bigskip

\noindent {\bf Acknowledgment.} We would like to thank the referee for his/her helpful comments which motivated us to improve the presentation of this paper. 

\bigskip


\begin{thebibliography}{99}

\bigskip

\bibitem{AFT}
A. Alahmadi, A. Facchini, N. K. Tung, {\it Automorphism-invariant modules}, Rend. Sem. Mat. Univ. Padova, 133 (2015), 241-259.   

\bibitem{AF} F. W. Anderson, K. R. Fuller, {\it Rings and
Categories of Modules}, Graduate Texts in Mathematics, 13,
Springer-Verlag, New York, 1992.

\bibitem{CLVW}
J. Clark, C. Lomp, N. Vanaja, R. Wisbauer, {\it Lifting Modules: Supplements and projectivity in module theory}, Frontiers in Mathematics, Birkhauser Verlag, Basel, 2006.

\bibitem{DF}
S. E. Dickson, K. R. Fuller, {\it Algebras for which every indecomposable right module is invariant in its injective envelope}, Pacific J. Math. 31, 3 (1969), 655-658.

\bibitem{ESS}
N. Er, S. Singh, A. K. Srivastava, {\it Rings and modules which are stable under automorphisms of their injective hulls}, J. Algebra 379 (2013), 223-229.

\bibitem{Golan}
J. S. Golan, {\it Characterization of rings using quasi projective modules}, Israel J. Math. 8 (1970), 34-38.

\bibitem{GKS}
P. A. Guil Asensio, D. Keskin T\"ut\"unc\"u and A. K. Srivastava, {\it Modules invariant under automorphisms of their covers and envelopes}, Israel J. Math. 206, 1 (2015), 457-482.

\bibitem{GS1}
P. A. Guil Asensio, A. K. Srivastava,  {\it Automorphism-invariant modules satisfy the exchange property}, J. Algebra 388 (2013), 101-106.

\bibitem{GS2}
P. A. Guil Asensio, A. K. Srivastava,  {\it Additive unit representations in endomorphism rings and an extension of a result of Dickson and Fuller}, Ring Theory and Its Applications, Contemporary Mathematics, Amer. Math. 
Soc. 609 (2014), 117-121.

\bibitem{GS3}
P. A. Guil Asensio, A. K. Srivastava,  {\it Automorphism-invariant modules}, Noncommutative rings and their applications, Contemporary Mathematics, Amer. Math. 
Soc., 634 (2015), 19-30.

\bibitem{GQS}
P. A. Guil Asensio, T. C. Quynh, A. K. Srivastava, Additive unit structure of endomorphism rings and invariance of modules under endomorphisms of their covers and envelopes, preprint.

\bibitem{JS} 
S. K. Jain, S. Singh, {\it On quasi-injective and pseudo-injective modules}, Canadian Math. Bull. 18 (1975), 359-366.

\bibitem{K}
A. Koehler, {\it Quasi projective covers and direct sums}, Proc. Amer. Math.. Soc. 24, 4 (1966), 655-658.

\bibitem{LZ}
T. K. Lee, Y. Zhou, {\it Modules which are invariant under automorphisms of their injective hulls}, J. Algebra Appl. 12 (2), (2013).

\bibitem{SS}
S. Singh, A. K. Srivastava, {\it Dual automorphism-invariant modules}, J. Algebra 371 (2012), 262-275.

\bibitem{SS2}
S. Singh, A. K. Srivastava, {\it Rings of invariant module type and automorphism-invariant modules}, Ring Theory and Its Applications, Contemporary Mathematics, Amer. Math. Soc. 609 (2014), 299-311.

\bibitem{Teply} 
M. Teply, {\it Pseudo-injective modules which are not
quasi-injective}, Proc. Amer. Math. Soc. 49 (1975), 305-310.

\bibitem{WJ}
L. E. T. Wu, J. P. Jans, {\it On quasi-projectives}, Illinois J. Math. 11 (1967), 439-448.

\bigskip

\end{thebibliography}
\end{document}